\newtheorem{theorem}{Theorem}[section]
\newtheorem{lemma}[theorem]{Lemma}
\newtheorem{definition}{Definition}
\numberwithin{equation}{section}
\begin{document}

\begin{titlepage}
   \vspace*{\stretch{1.0}}
   \begin{center}
      \Large\textbf{Finding Many Sparse Cuts Using Entropy Maximization}\\
      
      \Large\textbf{(March 8th, 2022)}  \\ 
     
	  \large\textit{Farshad Noravesh}\footnote{Email: noraveshfarshad@gmail.com}

   \end{center}
   \vspace*{\stretch{2.0}}
\begin{abstract}
A randomized algorithm for finding sparse cuts is given which is based on constructing a dual markov chain called multiscale rings process(MRP) and a new concept of entropy. It is shown how the time to absorption of the dual process measures the connectedness of the graph and mixing of the corresponding markov process which is then utilized to do clustering. The second algorithm uses the entropy which provides a new methodology and a set of tools to think about sparse cuts as well as sparsification of a graph. 
\end{abstract}

\end{titlepage}

\section{Introduction}
Partitiong a graph so that each cluster is densely connected with high expansion and clusters are sparsely connected to each other has always been an important problem in several areas such as computer science, big data, machine learning and bioinformatics. There are three major paradigms to deal with such a problem. The first approach comes from approximation algorithms and doing a combinatorial optimization and LP/SDP relaxations and roundings are the main classical tools specially in computer science.
The second paradigm arises from the desire to generalize cheeger inequality  and there are many important attempts such as \citep{Louis2012},\citep{Lee2014}.
The last paradigm which is relatively less appreciated is converting the problem to another problem that has the goal of finding all possible ways to reduce mixing time within each cluster. The absence of bottlenecks in the state space of a Markov chain would imply rapid mixing. Research on this paradigm is concentrated on some concepts as a proxy to mixing time. One approach is to use sparsification and electrical networks \citep{Spielman2008} and in the intrinsic level it uses hitting times. Another approach is dealing with mixing time of a markov chain \citep{Andersen2009},\citep{Morris2005}. A very important article for showing how different concepts like mixing times, hitting times are related to each other is discovered in \citep{Peres2011}.

In the present paper the shortest path metric on a graph is denoted by $d_{G}$.\\
The $\epsilon-$mixing time is defined as
\begin{equation}\label{eq-epsilonmixingtime}
\tau(\epsilon):=min \{n:||p^{n}(x,.)-\pi||\leq \epsilon \ for \ all \  x\in V \}
\end{equation}
\citep{Morris2005} is motivated by \citep{Lovasz1999} who proved that 
\begin{equation}
\tau(1/4)\leq 2000 \int_{\pi_*}^{3/4}\frac{du}{u\Phi^2(u)}
\end{equation}

There are many approaches to bounding the mixing time. One of them is using relaxation time as a proxy for mixing time. In fact markov chains can be characterized by some important concepts such as hitting time,cover time,relaxation time and mixing time and they are all related to each other. For example \citep{Oliveira2019} proved an upper bound for hitting time:
\begin{equation}\label{eq-Oliveira-hitting-bound}
t_{hit}\leq \frac{20d_{avg}}{d_{min}}n\sqrt{t_{rel} +1}
\end{equation} 

Another approach is to use coupling and there are many examples such as graph coloring and finding independent sets based on the idea of coupling and more specifically path coupling.
One of the most natural ideas to bound mixing time goes back to idea of \citep{Aldous1987} who defined strong stationary times and later \citep{Diaconis1990} constructed a dual markov chain and analyzed the time to absorption of the dual process and later \citep{Morris2005} developed this idea and then used it for finding sparse cuts locally and naming this dual process the evolving set process(ESP). \citep{Andersen2009} further developed the idea to construct a non-vanishing process called volume biased evolving set process(VB-ESP). \citep{Andersen2009} proved that the cardinality of evolving set process is a martingale.

 It can be easily observed that the probability of hitting 2k within 2n time steps for simple random walk is
\begin{equation}\label{eq-hitting-RW}
\mathbb{P}(S_{2n}=2k|S_{0}=0)={2n \choose n+k}p^{n+k}q^{n-k}, \ -n\leq k \leq n
\end{equation}

Hoeffding's theorem states that, for all $k > 0$
\begin{equation}\label{eq-Hoeffding}
\mathbb{P}(S_{n}-E(S_n)\geq k)\leq exp(-\frac{2k^2}{\sum_{i=1}^n (b_i-a_i)^2})
\end{equation}

The probability that the random walk on $\mathbb{Z}$ is at k after nth step is a direct consequence of Hoeffding's inequality
\begin{equation}\label{eq-hit-afternstep}
\sum_{|k|\geq d}q_{n}(k)\leq 2e^{-d^2/(2n)}
\end{equation}

\citep{Carne1985} proved the following theorem

\begin{theorem}\label{thm:carne-var} 
Denote by P the $L^2(\mu)$ operator associated to the transition kernel p and let $|P|$ stand for its norm which is always less than or equal to one. Then:

\begin{equation}\label{eq-carne85}
p^{t}(x,y)\leq 2(\frac{\mu(y)}{\mu(x)})^{1/2} \ |P|^{t} \  exp(\frac{-d(x,y)^2}{2t})
\end{equation}

\end{theorem}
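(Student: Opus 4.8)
The plan is to reduce this operator bound to a statement about the simple random walk on $\mathbb{Z}$ by expanding the $t$-th power of $P$ in Chebyshev polynomials, then to exploit the locality of the nearest-neighbour operator $P$ together with the tail estimate \eqref{eq-hit-afternstep}. Throughout I use that $P$ is self-adjoint on $L^2(\mu)$ (detailed balance, $\mu(x)p(x,y)=\mu(y)p(y,x)$), so that the functional calculus is available, and I set $\rho:=|P|\le 1$. The case $\rho=0$ is trivial, so assume $\rho>0$ and put $\tilde P:=\rho^{-1}P$, which is self-adjoint with $\|\tilde P\|=1$ and spectrum contained in $[-1,1]$; also write $d:=d(x,y)$.

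First I would recall the Chebyshev polynomials $Q_k$, defined by $Q_k(\cos\theta)=\cos(k\theta)$, which satisfy $\deg Q_k=k$ and $|Q_k(s)|\le 1$ for $s\in[-1,1]$. Writing $s=\cos\theta$, expanding $(\cos\theta)^t=\bigl(\tfrac{e^{i\theta}+e^{-i\theta}}{2}\bigr)^t$ by the binomial theorem and regrouping conjugate exponentials gives
\begin{equation*}
s^{t}=\sum_{k=-t}^{t} q_t(k)\,Q_{|k|}(s),\qquad s\in[-1,1],
\end{equation*}
where $q_t(k)=\mathbb{P}(S_t=k)=2^{-t}\binom{t}{(t+k)/2}$ is the law of the simple random walk $S_t$ on $\mathbb{Z}$ from $0$ (cf.\ \eqref{eq-hitting-RW} with $p=q=1/2$). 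Since both sides are polynomials in $s$, this identity lifts through the functional calculus for the self-adjoint $\tilde P$, so $\tilde P^{t}=\sum_{k=-t}^{t} q_t(k)\,Q_{|k|}(\tilde P)$ and hence $P^{t}=\rho^{t}\sum_{k=-t}^{t} q_t(k)\,Q_{|k|}(\tilde P)$.

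The two key observations come next. \emph{Locality}: $Q_{|k|}(\tilde P)$ is a polynomial of degree $|k|$ in $P$, and since $p(u,v)=0$ whenever $d_G(u,v)>1$, the matrix entry $\langle \mathds{1}_{\{x\}},\,Q_{|k|}(\tilde P)\,\mathds{1}_{\{y\}}\rangle_{L^2(\mu)}$ vanishes as soon as $|k|<d$, so only the terms with $|k|\ge d$ survive. \emph{Contraction}: because $|Q_{|k|}|\le 1$ on $[-1,1]\supseteq\mathrm{spec}(\tilde P)$, the functional calculus gives $\|Q_{|k|}(\tilde P)\|\le 1$, whence by Cauchy--Schwarz $|\langle \mathds{1}_{\{x\}},Q_{|k|}(\tilde P)\mathds{1}_{\{y\}}\rangle|\le \|\mathds{1}_{\{x\}}\|\,\|\mathds{1}_{\{y\}}\|=\sqrt{\mu(x)\mu(y)}$. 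Identifying $\mu(x)\,p^{t}(x,y)=\langle \mathds{1}_{\{x\}},P^{t}\mathds{1}_{\{y\}}\rangle_{L^2(\mu)}$ and combining these with the tail bound \eqref{eq-hit-afternstep}, i.e.\ $\sum_{|k|\ge d}q_t(k)\le 2e^{-d^{2}/(2t)}$, we get
\begin{equation*}
\mu(x)\,p^{t}(x,y)\le \rho^{t}\sqrt{\mu(x)\mu(y)}\sum_{|k|\ge d}q_t(k)\le 2\,\rho^{t}\sqrt{\mu(x)\mu(y)}\;e^{-d(x,y)^{2}/(2t)},
\end{equation*}
and dividing by $\mu(x)$ yields \eqref{eq-carne85}.

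The main obstacle — and the one genuinely clever ingredient — is the Chebyshev expansion of $s^{t}$ together with the degree bookkeeping: one must see that replacing the naive representation of $P^t$ by a Chebyshev expansion is precisely what turns the crude bound ``norm $\le 1$'' into the Gaussian factor, via the fact that a degree-$k$ polynomial in a nearest-neighbour operator cannot connect vertices at distance greater than $k$, so that the surviving coefficients are exactly the deviation probabilities controlled by Hoeffding's inequality \eqref{eq-Hoeffding}. The remaining ingredients (self-adjoint functional calculus, Cauchy--Schwarz, and the random-walk tail estimate) are routine.
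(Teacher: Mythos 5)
Your proof is correct and is essentially the argument the paper itself points to: the paper states the theorem as an imported result of \citep{Carne1985} without giving a proof, but its Lemma~\ref{polynomial} and the duality formula \eqref{firstdualrelation} are exactly your Chebyshev expansion, and combining that with the locality of a degree-$|k|$ polynomial in $P$, the spectral contraction bound, Cauchy--Schwarz, and the tail estimate \eqref{eq-hit-afternstep} is the standard Carne argument (including the refinement that normalizing by $|P|$ yields the $|P|^{t}$ factor). No gaps.
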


\section{Construction of MRP}
Subordination is an important tool to classify and analyze Markov processes such as brownian motion\citep{Kim2020}  which is also extended to the discrete case in \citep{Bendikov2012} and many different variations of it are defined in the literature for different purposes such as\citep{Ante2015}  . Here the following definition is used in the present paper to define MRP at any scale.
\begin{definition}
A subordinated random walk is defined as
\begin{equation} \label{subordinated}
Y_{n}:=S_{\tau_{n}}
\end{equation}
where $S_n=x+X_1+X_2+\ldots+X_n$ is a simple random walk starting at $x\in Z^d$ and $\tau_n=R_1+R_2+\ldots+R_n$ is a random ealk on $Z_{+}$ with independent increments $R_i$ having a probability distribution.
\end{definition}
\begin{definition}
Multiscale Rings Process(MRP) at scale s is a subordinated random walk defined in \ref{subordinated} while $\tau_n$ is stopped at a fixed threshold time $T_{th}^{(s)}$. The scale s is induced by the distribution of $R_i$(increments on $Z_+$)
\end{definition}

\begin{definition}
A ring at position r of scale s is defined by the set of points inside the following zone
\begin{equation}\label{eq-ring} 
ring_{s}(r)=\{x: r\leq  x\leq r+2^s  \}  
\end{equation}
\end{definition}

\begin{figure}[H]
  \includegraphics[width=\linewidth]{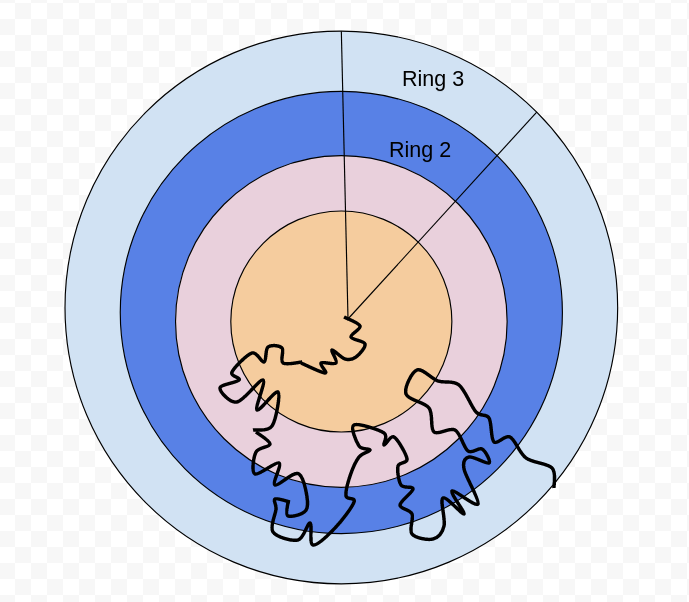}
  \caption{a random walk starting from the center and the rings $\Omega_i$}
  \label{fig-ring}
\end{figure}

\begin{definition}
Solar system associated to a center node is the union of all rings up to a point whose shortest path distance to center node is bounded by $10\times 2^{s}$ where s is the fixed length scale of all rings at scale s.
\end{definition}
A solar system is depicted in Figure~\ref{fig-ring}.
The algorithm~\ref{alg:clustering} starts by creating a metric embedding into $\ell^{2}$ using any favorite algorithm. Note that there may be a tradeoff between distortion of metric embedding and the time complexity of creating such an algorithm.
After creating the metric embedding and representing it as a polar transformation, the solar system associated to a center is created by ten rings.
\begin{definition}
Any ring is divided into equal number of blocks where each block has only two negihbor blocks in that ring and has two other neighbors from previous and next rings. $Ring(r)=\bigcup_{i=1}^{n_b}B^{(r)}_{i}$
\end{definition}
Note that there is no constraint on the graph to be planar and the blocks of a ring can be connected to each other by some edges in tangential direction but the shortest path distance in radial direction forces each block of a fixed ring to be connected to blocks of other rings by at most one block away and this can be done by choosing an appropriate scale of the ring.
The algorithm then chooses a random set(call it $U_{s}(r)$) for all blocks of the solar system. The algorithm chooses randomly until the vertices inside each $U_{s}(r)$ produces a connected subgraph.  
Consider a block at $ring_{s}(r)$, and define the exterior boundary of it as follows:
\begin{equation}\label{eq-exteriorboundary}
B_{\partial}^{(r)}=\{ x\in V: d_{G}(x,B^{(r)})=1 , x\not\in B^{(r)}  \}
\end{equation}  
The probability that a random walk on weighted graph starting from vertex i hits $U_{s}(r)$ before it hits the boundary $B_{\partial}^{(r)}$ is denoted by $q(i,U_{s}(r),B_{\partial}^{(r)}):=\mathbb{P}^{i}[T_U<T_{\partial B}]$
It is known that this type of function is a harmonic function and can be computed efficiently using a linear solver \citep{barlow2017} . Thus, imagine the function q on graph to be the solution of the following linear set of equations
\begin{equation}\label{eq-linearequations}
\begin{split}
\sum_{j}(q(i)-q(j))\frac{w_{ij}}{w_j}&=0 \ \text{if} \ i\in B^{(r)}\setminus U_{s}(r) \\
q(j)&=\begin{cases}
    1, & \text{if} \ j\in U_{s}(r) \\
    0, & \text{if} \ l\in B_{\partial}^{(r)} 
  \end{cases}
\end{split}   
\end{equation} 

\begin{definition}
relative absorption to a set U is a measure of absorption of a random walk to U as follows
\begin{equation}\label{eq-relativeabsorption}
RA=\sum_{i}\frac{q(i)}{d(i,B^{(r)})}  
\end{equation}
\end{definition}
Since $RA$ is calculated for each block, a fixed threshold for all blocks can be set to check if $RA$ is less than the allowed threshold $RA_{al}$.
The algorithm checks if relative absorption to $U_{s}(r)$ is smaller than $RA_{al}$ and in the cases that it is so we call these $U_{s}(r)$, safe sets. One can think that the whole process is like a percolation of light from the center of solar system until it reaches the farest ring in the solar system. So the probability that light passes through some rings and reaches the latest ring can be approximated by a geometric distribution. The approximation is exact if $RA$ is the same for all rings.  
The sets $U_{s}(r)$  are merged to each other if their mutual conductances is higher than a threshold $\phi_{al}$ which controls the sparsity among these sets. After merging many sets together, some giant components appears which we call them long radial constellations and can be considered as a cluster for the output of the algorithm. Apart from these long radial constellations, some clusters are formed by merging safe sets.
Since each solar system covers only a subset of vertices of the graph, the algorithm repeats for many different solar centers and can be done efficiently in parallel. 
\begin{definition}
galaxy is the sum of disjoint solar systems.
\end{definition}
The algorithm then outputs the set of clusters by union of clusters from different solar systems. 

\section{MRP as a dual process}
\begin{definition}
The green function upto ring r  is the green function of the random walk which is killed out of set $A_{r}$ and is denoted by 
\begin{equation}
G_{A_{r}}(x,y)=\sum_{n=0}^{\infty}p^{A_{r}}_{n}(x,y)
\end{equation}
where $A_{r}=\bigcup_{i=1}^{r} Ring(i)$
\end{definition}
\begin{lemma}
The green function upto ring r for a recurrent solar system for center c satisfies
\begin{equation}
G_{A_{r}}(c,c)=\frac{1}{\omega(c)\mathbb{P}^{c}(\tau_{A_r}\leq T^{+}_{c})}
\end{equation}
\end{lemma}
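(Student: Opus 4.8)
The plan is to read $G_{A_r}(c,c)$ probabilistically as an expected occupation count and then evaluate that count by an excursion (renewal) argument. Recall that $p_n^{A_r}(x,y)$ is the heat kernel of the walk killed on leaving $A_r$, i.e. $p_n^{A_r}(x,y)=\mathbb{P}^x(X_n=y,\ n<\tau_{A_r})/\omega(y)$, symmetric in $x,y$ with respect to $\omega$. Hence, writing $N:=\sum_{n\ge 0}\mathds{1}\{X_n=c,\ n<\tau_{A_r}\}$ for the number of visits to $c$ strictly before the walk exits $A_r$, Tonelli gives
\begin{equation}
G_{A_r}(c,c)=\sum_{n\ge 0}p_n^{A_r}(c,c)=\frac{1}{\omega(c)}\sum_{n\ge 0}\mathbb{P}^c(X_n=c,\ n<\tau_{A_r})=\frac{\mathbb{E}^c[N]}{\omega(c)}.
\end{equation}
So the lemma reduces to showing $\mathbb{E}^c[N]=1/\mathbb{P}^c(\tau_{A_r}\le T_c^+)$.

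First I would note that $A_{r}=\bigcup_{i\le r}\mathrm{Ring}(i)$ is a finite set (each ring is a finite union of blocks), and that in a recurrent solar system the walk started at $c$ leaves $A_r$ almost surely; consequently $N<\infty$ a.s. and $p:=\mathbb{P}^c(T_c^+<\tau_{A_r})<1$. Next I would decompose the trajectory at successive returns to $c$: by the strong Markov property applied at the stopping times $T_c^{(j)}\wedge\tau_{A_r}$, each time the walk sits at $c$ it independently either returns to $c$ before exiting $A_r$ (probability $p$) or exits $A_r$ before returning (probability $1-p=\mathbb{P}^c(\tau_{A_r}\le T_c^+)$). Therefore $N$ is geometric, $\mathbb{P}^c(N\ge k)=p^{\,k-1}$ for $k\ge 1$, whence $\mathbb{E}^c[N]=\sum_{k\ge 1}p^{\,k-1}=1/(1-p)=1/\mathbb{P}^c(\tau_{A_r}\le T_c^+)$. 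Combining with the displayed identity yields the claim.

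The two places that need care — and where I expect the only real friction — are the normalization bookkeeping (the factor $\omega(c)$ is exactly the density normalization of the heat kernel, and one must keep it consistent with the definition of $G_{A_r}$ used here) and the rigorous justification of the renewal decomposition: one has to verify that the excursions between consecutive visits to $c$, truncated at $\tau_{A_r}$, are genuinely i.i.d. under $\mathbb{P}^c$, which follows from the strong Markov property together with the a.s. finiteness of $N$ guaranteed by recurrence/finiteness of $A_r$. Conceptually this is just the restriction to $A_r$ of the classical Green-function identity $G(x,x)=\big(\omega(x)\,\mathbb{P}^x[T_x^+=\infty]\big)^{-1}$, with the escape event $\{T_x^+=\infty\}$ replaced by the event $\{\tau_{A_r}\le T_c^+\}$ of leaving $A_r$ before coming back.
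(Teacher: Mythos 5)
Your proposal is correct and follows essentially the same route as the paper: both identify $\omega(c)G_{A_r}(c,c)$ with the expected number of visits to $c$ before exiting $A_r$ (the paper's local time $L^c_{\tau_{A_r}}$ is your $N$), and both then use the strong Markov property to show this count is geometric with parameter $\mathbb{P}^c(\tau_{A_r}\le T_c^+)$, differing only in whether the expectation is computed as $\sum_k k\,\mathbb{P}(N=k)$ or $\sum_k \mathbb{P}(N\ge k)$.
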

\begin{proof}
Lets define the local time of the Markov chain on c to be
\begin{equation}
L^{c}_{n}=\sum_{r=0}^{n-1}1_{ \{X_{r}=c\} }
\end{equation}
Observe that
\begin{equation}
\mathbb{E}^{x}L^{c}_{\tau_{A_r}}=\sum_{n=0}^{\infty}\mathbb{P}^{x}(X_{n}=c,\tau_{A_r}>n)=g_{A_r}(x,c)\omega_{c}
\end{equation}
Strong Markov property implies that local time on c has a geometric distribution, Thus:
\begin{equation}
\begin{split}
\omega_{c}g_{A_r}(c,c)&=\mathbb{E}^{c}(L^{c}_{\tau_{A_r}}) \\
&=\sum_{k=1}^{\infty}k(\mathbb{P}^{c}(T^{+}_{c}<\tau_{A_r}))^{k-1}(1-\mathbb{P}^{c}(T^{+}_{c}\leq\tau_{A_r})) \\
&=\frac{1}{1-\mathbb{P}^{c}(T^{+}_{c}\leq\tau_{A_r})}  \\
&=\frac{1}{\mathbb{P}^{c}(\tau_{A_r}\leq T^{+}_{c})}
\end{split}
\end{equation}
\end{proof}

The proof of the following two lemma can be found for example in \citep{barlow2017}
\begin{lemma} 
Suppose $X_j$ is the random walk on the solar system,$A_{i}=Ring(i)$,$B_{i}=Ring(i)\cup Ring(i+1)$, $\tau_{i}=\inf\{j\geq 1 : X_{j}\in A_{i} \}$ and $\bar{\tau_{i}}=\inf\{j\geq 0 : X_{j}\in A_{i}  \}$ \\
If $\eta_{i}=\inf\{j\geq 0 :X_{j}\not\in B_{i}\}$ , then 
\begin{equation}
\mathbb{P}^{x}\{ \bar{\tau_{i}}\leq \eta_{i} \}=\sum_{y\in A_{i}}G_{B_{i}}(x,y)\mathbb{P}^{y}\{\tau_{i}> \eta_{i} \}
\end{equation}
holds for all rings $i=1,\ldots,n_r$
\end{lemma}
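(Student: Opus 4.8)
\noindent The plan is to establish the identity by a \emph{last-exit decomposition}: we split each trajectory according to the last time it visits $A_i$ before the killing time $\eta_i$. Throughout, recall that the killed Green function is $G_{B_i}(x,y)=\sum_{n=0}^{\infty}p_{n}^{B_i}(x,y)$ with $p_{n}^{B_i}(x,y)=\mathbb{P}^{x}\{X_{n}=y,\ \eta_{i}>n\}$, so that $G_{B_i}(x,y)$ equals the expected number of visits of the walk to $y$ strictly before it leaves $B_i$; since $B_i=Ring(i)\cup Ring(i+1)$ is finite and the walk a.s.\ exits it, this quantity is finite and the interchanges of summation below are justified by Tonelli. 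Nothing in the argument uses a special property of the index $i$, so the same reasoning yields the identity for every ring $i=1,\dots,n_r$.

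First I would observe that since $A_{i}\subseteq B_{i}$ while $X_{\eta_{i}}\notin B_{i}$, any visit to $A_i$ occurring at a time $\le\eta_i$ must in fact occur at a time $<\eta_i$. Hence, on the event $\{\bar\tau_{i}\le\eta_{i}\}$, the last-exit time $\ell:=\sup\{n<\eta_{i}:X_{n}\in A_{i}\}$ is well defined, finite, and satisfies $X_{\ell}\in A_{i}$. The events $\{\ell=n\}$, $n\ge 0$, are pairwise disjoint, their union is precisely $\{\bar\tau_{i}\le\eta_{i}\}$, and unwinding the definition of $\ell$ gives
\begin{equation}
\mathbb{P}^{x}\{\bar\tau_{i}\le\eta_{i}\}
=\sum_{n=0}^{\infty}\ \sum_{y\in A_{i}}\mathbb{P}^{x}\bigl\{X_{n}=y,\ \eta_{i}>n,\ X_{j}\notin A_{i}\ \forall\, j\in(n,\eta_{i}]\bigr\}.
\end{equation}
Next I would apply the Markov property at the deterministic time $n$. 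On $\{X_{n}=y\}$ with $y\in A_{i}\subseteq B_{i}$ the walk has not left $B_i$ by time $n$, so $\eta_{i}=n+\eta_{i}'$ with $\eta_{i}'=\inf\{k\ge 0:X_{n+k}\notin B_{i}\}\ge 1$, and the constraint ``$X_{j}\notin A_{i}$ for $j\in(n,\eta_{i}]$'' depends only on the post-$n$ trajectory, being exactly the event that the walk restarted at $y$ avoids $A_i$ throughout $\{1,\dots,\eta_{i}'\}$, i.e.\ $\{\tau_{i}>\eta_{i}\}$ for that fresh walk. Thus every summand factorizes as $p_{n}^{B_i}(x,y)\,\mathbb{P}^{y}\{\tau_{i}>\eta_{i}\}$; summing over $n$ collapses $\sum_{n}p_{n}^{B_i}(x,y)=G_{B_i}(x,y)$, and summing over $y\in A_{i}$ yields the claim.

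I expect the obstacles to be of a bookkeeping rather than conceptual nature. The two places that need care are (i) verifying that on $\{\bar\tau_i\le\eta_i\}$ the last visit to $A_i$ lands strictly before $\eta_i$ — this is where $A_i\subseteq B_i$ enters, and it is what makes $\ell$ well defined — and (ii) using the restart relation $\eta_i=n+\eta_i'$ \emph{inside} the event before invoking the Markov property, so that the future constraint genuinely detaches from $\mathcal F_n$ (a last-exit time is not a stopping time, which is why we split over deterministic $n$ rather than applying the strong Markov property directly). One should also pin down the normalization: above $G_{B_i}(x,y)=\sum_n p_n^{B_i}(x,y)$ counts visits in counting measure, whereas if one uses the $\omega$-normalized Green function $g_{B_i}=G_{B_i}/\omega$ of the previous lemma, the right-hand side must carry a compensating factor $\omega(y)$ inside the sum.
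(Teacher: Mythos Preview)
Your proposal is correct and follows essentially the same last-exit decomposition as the paper's own proof: both define the last visit to $A_i$ before exiting $B_i$, sum over the deterministic time and location of that visit, and invoke the simple Markov property to factor each summand into $p_{n}^{B_i}(x,y)\,\mathbb{P}^{y}\{\tau_i>\eta_i\}$ before collapsing the time sum into $G_{B_i}(x,y)$. Your write-up is in fact somewhat more careful than the paper's about why the last visit occurs strictly before $\eta_i$ and about the justification for interchanging sums, but the underlying argument is identical.
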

\begin{proof}
let $\sigma_{i}=\sup\{j: X_{j}\in A_{i}, j\leq \eta_{i} \}$ then
\begin{equation}
\begin{split}
\mathbb{P}^{x}\{\bar{\tau_{i}}\leq \eta_{i}\}&=\sum_{j=0}^{\infty} \mathbb{P}^{x} \{\sigma=j  \}  \\
&=\sum_{y\in A_{i}}  \sum_{j=0}^{\infty} \mathbb{P}^{x} \{\sigma=j  ,X_{j}=y\} 
\end{split}
\end{equation}
Simple markov property produces
\begin{equation}
\begin{split}
&=\sum_{y\in A_{i}}  \sum_{j=0}^{\infty} \mathbb{P}^{x} \{ X_{j}=y;j\leq \eta_{i};X_{k}\not\in A_{i};j<k\leq\eta_{i} \}  \} \\
&=\sum_{y\in A_{i}}  \sum_{j=0}^{\infty} \mathbb{P}^{x} \{ X_{j}=y;j\leq \eta_{i}  \} \mathbb{P}^{y} \{ \tau_{i}>\eta_{i}  \}
\end{split}
\end{equation} 
A simple rearrangement shows that
\begin{equation}
\begin{split}
\mathbb{P}^{x}\{ \bar{\tau_{i}}\leq\eta_{i} \} &= \sum_{y\in A_{i}} \mathbb{P}^{y} \{\tau_{i}>\eta_{i}  \} \sum_{j=0}^{\infty}\mathbb{P}^{x}\{ X_{j}=y;j\leq \eta \}  \\
 &=\sum_{y\in A_{i}}  \mathbb{P}^{y} \{\tau_{i}>\eta_{i} \} G_{B_{i}}(x,y) 
\end{split}
\end{equation}
\end{proof}
\begin{lemma} 
Let $S_n$ be a simple symmetric random walk on $Z$ with $S_{0}=0$ then 
\begin{equation}
\sum_{r\in Z} \lambda^{r}\mathbb{P}(S_{n}=r)=2^{-n}\sum_{r=0}^{n}\lambda^{2r-n}{n \choose r} \ -1\leq t \leq 1
\end{equation}
\end{lemma}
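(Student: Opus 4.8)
The plan is to recognize the left-hand side as the two-sided probability generating function of $S_n$ evaluated at $\lambda$, and then to evaluate it. First I would observe that, since $S_0=0$ and each step changes the position by $\pm 1$, we have $\mathbb{P}(S_n=r)=0$ whenever $|r|>n$ or $r\not\equiv n\pmod 2$; hence the sum over $r\in\mathbb{Z}$ on the left is in fact a finite Laurent polynomial in $\lambda$, so no convergence issue arises (in particular the stated range is immaterial) and all the rearrangements below are legitimate termwise. In other words, $\sum_{r\in\mathbb{Z}}\lambda^{r}\mathbb{P}(S_n=r)=\mathbb{E}^{0}[\lambda^{S_n}]$.

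The first route is to write $S_n=X_1+\cdots+X_n$ with the increments $X_i$ independent and $\mathbb{P}(X_i=1)=\mathbb{P}(X_i=-1)=\tfrac12$. Independence gives $\mathbb{E}[\lambda^{S_n}]=\prod_{i=1}^{n}\mathbb{E}[\lambda^{X_i}]=\bigl(\tfrac12(\lambda+\lambda^{-1})\bigr)^{n}$. Expanding by the binomial theorem, $\bigl(\tfrac12(\lambda+\lambda^{-1})\bigr)^{n}=2^{-n}\sum_{r=0}^{n}\binom{n}{r}\lambda^{r}\lambda^{-(n-r)}=2^{-n}\sum_{r=0}^{n}\binom{n}{r}\lambda^{2r-n}$, which is exactly the claimed right-hand side.

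Alternatively, and even more directly, one can bypass generating functions and use elementary path counting: a walk of length $n$ ending at position $r$ must consist of $r_{+}$ up-steps and $n-r_{+}$ down-steps with $r=2r_{+}-n$, and there are $\binom{n}{r_{+}}$ such walks, each of probability $2^{-n}$; this is the symmetric ($p=q=\tfrac12$) specialization of equation~(\ref{eq-hitting-RW}), and it shows $\mathbb{P}(S_n=2r_{+}-n)=\binom{n}{r_{+}}2^{-n}$ for $0\le r_{+}\le n$. Substituting this into the left-hand side and reindexing the sum over $r_{+}=0,\dots,n$ immediately yields $2^{-n}\sum_{r_{+}=0}^{n}\binom{n}{r_{+}}\lambda^{2r_{+}-n}$.

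Neither route conceals any genuine difficulty; this is a one-line generating-function identity. The only point meriting a sentence of care is the bookkeeping in the first paragraph — that the bilateral sum has only finitely many nonzero terms — which guarantees that the factorization and the binomial expansion may be carried out and compared coefficient by coefficient without any analytic subtlety.
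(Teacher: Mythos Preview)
Your proof is correct. In fact the paper does not prove this lemma at all: it merely states it and refers the reader to \citep{barlow2017}, so there is no ``paper's own proof'' to compare against. Your first route---writing $\mathbb{E}[\lambda^{S_n}]=\bigl(\tfrac12(\lambda+\lambda^{-1})\bigr)^{n}$ by independence of the increments and then expanding by the binomial theorem---is the standard one-line argument, and your remark that the bilateral sum is actually a finite Laurent polynomial neatly disposes of any convergence worry (and, as you note, makes the stated range on the parameter irrelevant). The alternative path-counting route is also fine and is essentially the same computation read backwards.
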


Multiplying chebyshev polynomials $H_{k}(t)=\frac{1}{2}(t+i(1-t^{2})^{\frac{1}{2}})^{k}+\frac{1}{2}(t-i(1-t^{2})^{\frac{1}{2}})^{k}$ by probability thar random walk hits k and summing over all k has the following equality
\begin{lemma}\label{polynomial}
For each k, $H_{k}(t)$ is a real polynomial in t of degree k and $|H_{|k|}(t)|\leq 1$, Further, for each $n\geq 0$ 
\begin{equation}
t^{n}=\sum_{k\in Z} \mathbb{P}(S_{n}=k)H_{|k|}(t)
\end{equation}
\end{lemma}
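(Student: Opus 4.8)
The plan is to reduce every assertion to the substitution $t=\cos\theta$ together with the elementary evaluation of the generating function of $S_n$; the preceding lemma then does the remaining bookkeeping almost for free.

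First I would dispose of the structural claims about $H_k$. Writing $u:=(1-t^{2})^{1/2}$, one has $(t+iu)(t-iu)=t^{2}+(1-t^{2})=1$, so the two conjugate factors in the definition of $H_k$ are mutual reciprocals; in particular $H_{-k}=H_{k}$, which is why the sum over $k\in\mathbb{Z}$ is unambiguous once it is written with $H_{|k|}$. Expanding $H_k$ by the binomial theorem, only the even powers of $iu$ survive the symmetrization, and an even power of $u$ is a polynomial in $t$, so $H_k$ is a polynomial with real coefficients; its degree-$k$ terms sum to $2^{k-1}t^{k}$ for $k\ge 1$, so $H_k$ has degree exactly $k$. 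For $t\in[-1,1]$ put $t=\cos\theta$ with $\theta\in[0,\pi]$, so that $u=\sin\theta$ and $t\pm iu=e^{\pm i\theta}$; then $H_k(\cos\theta)=\tfrac12(e^{ik\theta}+e^{-ik\theta})=\cos k\theta$, which gives $|H_{|k|}(t)|\le 1$ on $[-1,1]$.

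For the identity, note first that $\mathbb{P}(S_n=k)=0$ unless $|k|\le n$, so the right-hand side is a genuine finite polynomial in $t$. Now apply the preceding lemma with $\lambda:=t+i(1-t^{2})^{1/2}$. By the reciprocity $\lambda^{-1}=t-iu$ and the symmetry $\mathbb{P}(S_n=k)=\mathbb{P}(S_n=-k)$ of the walk, the left-hand side $\sum_{k}\lambda^{k}\,\mathbb{P}(S_n=k)$ is invariant under $\lambda\mapsto\lambda^{-1}$, hence equals $\tfrac12\sum_{k}(\lambda^{k}+\lambda^{-k})\mathbb{P}(S_n=k)=\sum_{k}H_{|k|}(t)\,\mathbb{P}(S_n=k)$. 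On the other side, $2^{-n}\sum_{r=0}^{n}\lambda^{2r-n}\binom{n}{r}=2^{-n}\lambda^{-n}(1+\lambda^{2})^{n}=2^{-n}(\lambda+\lambda^{-1})^{n}=2^{-n}(2t)^{n}=t^{n}$. This establishes $t^{n}=\sum_{k}\mathbb{P}(S_n=k)H_{|k|}(t)$ for all $t\in[-1,1]$, and since both sides are polynomials agreeing on an infinite set they agree for every $t$.

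The argument has essentially no hard step; the only point needing a little care is the branch of the square root $(1-t^{2})^{1/2}$. I would fix it to be the nonnegative real root on $[-1,1]$, carry out the computation there, and invoke the polynomial-identity principle to conclude in general --- the final statement, being an identity between polynomials, cannot depend on the branch. If one prefers to bypass the preceding lemma, the same identity drops out of $\mathbb{E}[e^{i\theta S_n}]=(\mathbb{E}[e^{i\theta X_1}])^{n}=(\cos\theta)^{n}$ by taking real parts and using the symmetry of $S_n$, combined with $H_{|k|}(\cos\theta)=\cos k\theta$ from the first step.
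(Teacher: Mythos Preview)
Your proof is correct and follows exactly the route the paper indicates: the paper does not supply an explicit proof (it defers to \cite{barlow2017}), but the one-line remark preceding the lemma---``multiplying Chebyshev polynomials $H_{k}(t)$ by the probability that the random walk hits $k$ and summing over all $k$''---is precisely what you carry out by plugging $\lambda=t+i(1-t^{2})^{1/2}$ into the preceding generating-function lemma and using $\lambda+\lambda^{-1}=2t$. Your handling of the structural claims (reality and degree via the binomial expansion, the bound via $t=\cos\theta$) and the polynomial-identity principle to pass from $[-1,1]$ to all $t$ is clean and complete.
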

The importance of Lemma~\ref{polynomial}  is that the scalar can be substituted by the transition probability matrix of the random walk on weighted graph:
\begin{equation}\label{firstdualrelation}
P^{n}=\sum_{k\in Z} \mathbb{P}(S_{n}=k)H_{|k|}(P)
\end{equation}
\eqref{firstdualrelation} shows the duality between the primary process and the random walk on $Z$ which is also used for proof of Carne–Varopoulos bound in Theorem~\ref{thm:carne-var}. Now $p_{n}(x,y)$ can be written as
\begin{equation}
\begin{split}
p_{n}(x,y)&=\langle f_1,P^{n}f_2 \rangle \\
&= \sum_{k\in Z} \mathbb{P}(S_{n}=k)\langle f_{1},  H_{|k|}(P)f_{2} \rangle
\end{split}
\end{equation}
where $f_{i}(x)=\mathds{1}_{x_i}\mu_{x_i}^{-\frac{1}{2}}$ for $i=1,2$
This duality is too strong and it is better to work with some expectations(such as characteristics for real random variable and generating functions for discrete random variables) rather than transition probabilities. In fact in the present paper, convergence in distribution(the weakest form of convergence) is the main tool.
It is well known in the theory of discrete subordination of random walk, any Bernstein function has a Lévy–Khintchine representation. \citep{Schilling2012}
\begin{equation}
\phi(x)=a+bx+\int_{0}^{\infty} (1-e^{xy})\nu(dy), \ x\geq 0
\end{equation}
where $\nu$ is the radon measure.

\section{Many sparse cuts}
As mentioned in the introduction, algorithm~\ref{alg:clustering} is a randomized algorithm that might be practical for cases that involves big data that a parallel algorithm is essential. This algorithm can be combined with evolutionary algorithms to find semi-optimum clusters since each run of algorithm produces a measure such as conductance for the quality of clustering.  
\begin{algorithm}[H]
\caption{outputs many clusters}
\label{alg:clustering}
Input: A weighted undirected graph with n vertices and m edges \\
loop over $n_c$ centers:\\
loop for different randomizations: \\
1: create a metric embedding into $\ell^{2}_{2}$ and save records in polar coordinates inside the database  \\
2: create the associated solar system \\
3: solve the linear equations associated to blocks of each ring in parallel  \\
5: make one random set for each block of each ring \\
4: check if relative absorption to random sets in step 5 is smaller than the threshold $RA_{al}$, if it is smaller, label the random set as safe set.\\
5: merge safe sets if the conductance between them is higher than a threshold($\phi_{al}$) \\   
Output: giant components and merged safe sets of all solar systems 
\end{algorithm}

\section{Entropy}
The concept of optimal transport time or sometimes called access time from $\mu$ to $\nu$ is defined by \citep{Lovasz1995} 
\begin{equation}\label{accesstime}
H(\mu,\nu):=\inf_{\tau\in \Gamma}\mathbb{E}[\tau] 
\end{equation}
where $\Gamma$ is the set of stopping times that transport $\mu$ to $\nu$. This problem has a close relation with skorokhod embedding if one replaces brownian motion with a general discrete markov chain. \citep{Beiglboeck2017} and a practical tool to solve these problems are by PDE and free boundary problem approaches which can be efficiently solved by computers.\citep{Ghoussoub2019}

Now consider the following function
\begin{equation}\label{generalfunction}
V(x)=\sup_{\tau} \mathbb{E}_{x}(M(X_{\tau}))+\int_{0}^{\tau} L(X_{t})dt+ \sup_{0\leq t\leq \tau} K(X_{t})+ F(x)
\end{equation}
Important quantities such as \eqref{accesstime} are just a special case of \eqref{generalfunction} when the functions $L,K,F$ are zero and $M=-\tau$.
Another important special case is average value of a process $Y_{t}=L(X_{t})$ over time and is related to large deviation theory for markov chains and ergodic theory:
\begin{equation}
\begin{split}
A_{t}&=\frac{1}{T}\sum_{i=1}^{T}Y_{i}  \\
Y_{i}&=d(x,X_i)
\end{split}
\end{equation}
where $Y_i$ measures how far the markov chain is from a fixed reference vertex $x$ and the corresponding event can be approximated by rate function $I(a)$ as follows:
\begin{equation}
\mathbb{P}(A_{t}=a)\approx \exp^{-TI(a)}
\end{equation}
One example that is also related to American put option is when \eqref{generalfunction} takes the following special form
\begin{equation}
V(x)=\sup _{\tau} \mathbb{E} e^{-r\tau}(K-X_{\tau})^{+}
\end{equation}
where K is the strike price, r is the interest rate, and it can easily be reduced to free boundary value problem and the corresponding optimal stopping time turns out to exist and is 
\begin{equation}
\tau_{\star}=\inf \{0\leq t \leq T : X_{t}\leq b_{\star}(t)   \}
\end{equation}
Note that function $F(x)$ in \eqref{generalfunction} is independent of time and stopping time. A very important example is when $F(x)$ is a probability that the the discrepancy between shortest path distance between a fixed vertex and a subset $\Omega$ and the time it takes for the random walk to hit $\Omega$,  is C times greater than shortest path distance  as follows
\begin{equation}
F(x)=\mathbb{P}(T(x,\Omega)-d(x,\Omega)> C d(x,\Omega))
\end{equation}
The methodology of algorithm~\ref{alg:clustering2} for clustering is based on including the vertices as much as possible to increase entropy within each cluster.
Two concepts of entropy namely $\alpha$-entropy and $\beta$-entropy is defined. The former is diffucult to compute in practice while the latter is easy to calculate ,it has no limit, and is used in algorithm~\ref{alg:clustering2}. The relation between these two entropies is an open question. Note that at each moment of time, the random walk on the weighted graph belongs to one or more subsets of the state space of the markov chain. Thus an example of trajectroy of the dynamics of a particle going by the random walk could be like $\{\ldots,\Omega_3,\Omega_2,\Omega_2,\Omega_4,\Omega_1,\Omega_2,\ldots \}$ . So at each time the random walk could belong to any of these subsets. For simplicity and without lack of generality assume these subsets are balls of some fixed radious in the shortest path metric on the normalized weighted graph. Thus, $\Omega$ is shown in figure~\ref{fig-covering} and is decomposed as follows:
\begin{equation}\label{decomposition}
\Omega= \cup_{i=1}^{n} \Omega_i
\end{equation}
\begin{figure}[H]
  \includegraphics[width=\linewidth]{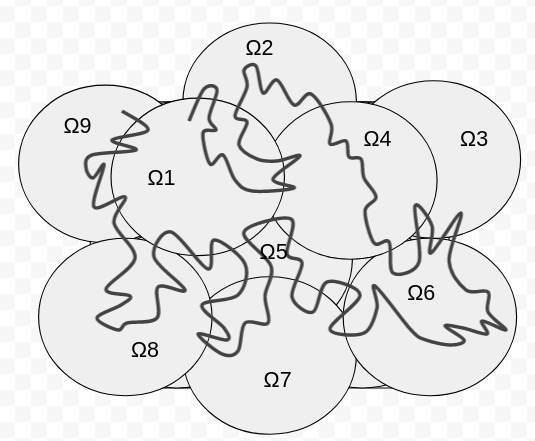}
  \caption{a random walk on weighted graph and the covering $\Omega_i$}
  \label{fig-covering}
\end{figure}

Observer that as n is increased in \eqref{decomposition}, the number of balls are increased to cover all vertices and balls have smaller radius.
\begin{definition}
$\alpha$-entropy is defined as follows
\begin{equation}
H^{(\alpha)}=\lim_{n\to \infty}-\frac{1}{n}\sum_{i_{1}i_{2}\ldots i_{n}}\mathbb{P}(\Omega_{i_1},\Omega_{i_2},\ldots,\Omega_{i_n})\log \mathbb{P}(\Omega_{i_1},\Omega_{i_2},\ldots,\Omega_{i_n})
\end{equation}
\end{definition}

\begin{definition}
$\beta$-entropy is defined as follows
\begin{equation}
H^{(\beta)}=\frac{1}{n}\sum_{i=1}^{n}V_{i}(x)\log(V_{i}(x))
\end{equation}
while functions $V_{i}$ corresponds to \ref{generalfunction} where function $M=d(x,X_{\tau})$ , K is zero, and $L_i$ is associated with $V_i$ and is defined as follows 
\begin{equation}
L_{i}(X_t)=\mathds{1}(X_t\in\Omega_i) , i=1,\ldots,n
\end{equation}
\end{definition}
Define the following stochastic process: \\
\begin{equation}\label{snell-envelope}
\begin{split}
B_{N}^{N}&=M_{N}  \\
B_{n}^{N}&=\max \{M_{n},\mathbb{E}(B_{n+1}^{N}|\mathcal{F}_{n})  \} ,\ n=N-1,N-2,\ldots
\end{split}
\end{equation}
The importance of \eqref{snell-envelope} is that it is the smallest supermartingale that majorizes $M(X_k)$ for $n\leq k \leq N$.
The following class of stopping times is used in the next lemma.
\begin{equation}\label{eq-optimalstoppingtime}
\tau_{n}^{N}=\inf \{k\in \{n,n+1,\ldots,N \}:B_{k}^{N}=M_{k}  \}
\end{equation}
\begin{lemma}\label{majorizingprocess}
Suppose $M=(M_{k})_{k=n}^{N}$ satisfies $\mathbb{E}\max_{n\leq k\leq N} M_{k}<\infty$, then the stopping time defined in  \eqref{eq-optimalstoppingtime} is optimal in \eqref{generalfunction}
\end{lemma}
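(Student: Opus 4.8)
The plan is to read \eqref{snell-envelope} as the Snell envelope of the reward sequence $M=(M_k)_{k=n}^N$ and to run the classical backward‑induction argument for optimal stopping, specialised to \eqref{generalfunction} with $L\equiv K\equiv F\equiv 0$, so that the objective is $\sup_\sigma \mathbb{E}_x M(X_\sigma)$ over stopping times $\sigma$ valued in $\{n,\dots,N\}$. First I would settle integrability: a downward induction from $k=N$ gives $M_k\le B_k^N\le \mathbb{E}\!\left[\max_{k\le j\le N}M_j\mid\mathcal{F}_k\right]$, so the hypothesis $\mathbb{E}\max_{n\le k\le N}M_k<\infty$ together with $B_N^N=M_N$ makes every $B_k^N$ integrable and legitimises the optional‑stopping identities used below. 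Next, $\tau_n^N$ from \eqref{eq-optimalstoppingtime} is a genuine stopping time valued in $\{n,\dots,N\}$: the defining set is nonempty because $B_N^N=M_N$, and $\{\tau_n^N=k\}=\bigcap_{j=n}^{k-1}\{B_j^N>M_j\}\cap\{B_k^N=M_k\}\in\mathcal{F}_k$, using $B_j^N\ge M_j$ to turn $\ne$ into $>$.

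The core of the argument is the dichotomy built into the recursion: on $\{k<\tau_n^N\}$ one has $B_k^N>M_k$, so the maximum in \eqref{snell-envelope} must be attained by the second term, i.e.\ $B_k^N=\mathbb{E}[B_{k+1}^N\mid\mathcal{F}_k]$ there. Since $\{k<\tau_n^N\}\in\mathcal{F}_k$, this shows the stopped process $(B_{k\wedge\tau_n^N}^N)_{k=n}^N$ is a martingale, whence $\mathbb{E}_x[B_n^N]=\mathbb{E}_x[B_{N\wedge\tau_n^N}^N]=\mathbb{E}_x[B_{\tau_n^N}^N]=\mathbb{E}_x[M_{\tau_n^N}]$, the last step being the definition of $\tau_n^N$ and the bound $\tau_n^N\le N$. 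On the other hand $(B_k^N)$ is, by \eqref{snell-envelope}, a supermartingale dominating $M$ pointwise, so for any stopping time $\sigma$ valued in $\{n,\dots,N\}$ the optional stopping theorem for supermartingales yields $\mathbb{E}_x[M_\sigma]\le\mathbb{E}_x[B_\sigma^N]\le\mathbb{E}_x[B_n^N]$. Combining the two chains of equalities and inequalities gives $\mathbb{E}_x[M_{\tau_n^N}]=\mathbb{E}_x[B_n^N]=\sup_\sigma\mathbb{E}_x[M_\sigma]=V(x)$, which is exactly the optimality of $\tau_n^N$ asserted in the lemma.

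For completeness I would also verify the claim preceding the lemma that \eqref{snell-envelope} is the \emph{smallest} supermartingale dominating $M$: if $(Z_k)$ is any such supermartingale, a downward induction from $k=N$ using $Z_k\ge\max\{M_k,\mathbb{E}[Z_{k+1}\mid\mathcal{F}_k]\}\ge\max\{M_k,\mathbb{E}[B_{k+1}^N\mid\mathcal{F}_k]\}=B_k^N$ finishes it; note that only the supermartingale and domination properties of $B^N$, not its minimality, are actually needed above. I expect the one genuine subtlety to be bookkeeping rather than conceptual — making sure the \emph{one‑sided} integrability hypothesis $\mathbb{E}\max_k M_k<\infty$ really suffices to take expectations in the stopped (super)martingale identities. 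If it does not in full generality, the clean fixes are to strengthen it to $\mathbb{E}\max_k|M_k|<\infty$, or simply to observe that in the intended application $M_k=d(x,X_k)\ge 0$, so the lower bound is automatic and the issue disappears.
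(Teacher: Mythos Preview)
Your proposal is correct and follows essentially the same Snell-envelope argument as the paper: show that the stopped process $(B^N_{k\wedge\tau_n^N})$ is a martingale to get $\mathbb{E}[B_n^N]=\mathbb{E}[M_{\tau_n^N}]$, then use the supermartingale domination and optional stopping for the upper bound, and verify minimality by backward induction. Your version is in fact more careful than the paper's --- you check integrability, verify that $\tau_n^N$ is a stopping time, and correctly flag that the one-sided hypothesis $\mathbb{E}\max_k M_k<\infty$ may not quite suffice and that minimality is not actually needed --- but the route is the same.
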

\begin{proof}
First the proof sketch is described. Observe that it suffices to show that $B_{k}^{N}$ is the smallest supermartingale that majorizes M and $B^{N}_{\tau_{n}^{N}\wedge k}$ is martingale for $n \leq k \leq N$ and then Doob's optional stopping theorem applied to this martingale shows that 
\begin{equation}\label{eq-whp}
\begin{split}
B_{n}^{N}&\geq \mathbb{E}(M_{\tau}| \mathcal{F}_{n})  \\
B_{n}^{N}&=\mathbb{E}(M_{\tau_{n}^{N}}|\mathcal{F}_n)
\end{split}
\end{equation}
Taking expectations of both sides of \eqref{eq-whp} gives \\
\begin{equation}
\mathbb{E}M_{\tau}\leq \mathbb{E}M_{n}^{N}=\mathbb{E}M_{\tau_{n}^{N}}
\end{equation}
which shows that $\tau_{n}^{N}$ is the optimal stopping time.
Now that the proof sketch is given, the pieces of it is proven as follows: The following relation proves that if $\bar B$ is another supermartingale that majorizes $M$ ,it is not the smallest supermartingle.
\begin{equation}
\bar{B}^{N}_{k-1}\geq \max\{M_{k-1},\mathbb{E} \bar{B}_{k}^{N}|\mathcal{F}_{k-1}\} \geq \max\{M_{k-1},\mathbb{E} B_{k}^{N}|\mathcal{F}_{k-1}\}=B_{k-1}^{N}
\end{equation} 
The next relationship proves that $B^{N}_{\tau_{n}^{N}\wedge k}$ is martingale.
\begin{equation}
\mathbb{E}[B^{N}_{\tau_{n}\wedge (k+1)} | \mathcal{F}_k ]=B^{N}_{\tau_{n}^{k}\wedge (k)} \mathds{1}_{ \{ \tau_{n}^{N} \leq k \} } + B^{N}_{k} \mathds{1}_{ \{ \tau_{n}^{N} > k \} }=B^{N}_{\tau_{n}^{N}\wedge k}
\end{equation}
\end{proof}

\begin{algorithm}[H]
\caption{outputs clusters by entropy maximization}
\label{alg:clustering2}
Input: A weighted undirected graph with n vertices and m edges \\
loop: \\
1: generate random sets $K_l$ of vertices of the graph each of fixed cardinality \\
2: calculate $\beta$-entropy for all $K_l$  \\
3: sort entropies in step 2  \\
Output: top high entropy sets in step 3 
\end{algorithm}

\section{Conclusion}
Since the second largest eigenvalue of transition matrix and cheeger inequality characterize the markov chain on graph only globally, the present algorithm is designed for big data problems where computing the eigenvectors and eigenvalues of a big graph is time consuming and not robust to variations inside the graph.

\bibliographystyle{agsm}
\bibliography{hitting}
\end{document}